\newtheorem{exm} [subsection]{Example}
\newtheorem{prop}[subsection]{Proposition}
\newtheorem{teor}[subsection]{Theorem}
\newtheorem{lema}[subsection]{Lemma}
\newcommand{\Nn}{\mathbb N^n}
\newcommand{\Nni}{\mathbb N_{\infty}^n}
\newcommand{\MG}{\mathcal M(\Gamma)}
\begin{document}
\selectlanguage{english}
\frenchspacing

\large
\begin{center}
\textbf{Vertex cover algebras of simplicial multicomplexes}

Mircea Cimpoea\c s
\end{center}
\normalsize

\begin{abstract}
We define vertex cover algebras for weighted simplicial multicomplexes and prove basics properties of them. Also,
we describe these algebras for multicomplexes which have only one maximal facet and we prove that they are finitely generated.

\noindent \textbf{Keywords:} vertex cover algebra, monomial ideal.

\noindent \textbf{2000 Mathematics Subject
Classification:} Primary: 13P10.
\end{abstract}

\section*{Introduction}

\hspace{12pt}
We denote by $\mathbb N$ the set of nonnegative integers. Let $a,b\in \Nn$ be two vectors. We say that $a\leq b$ if
$a(i)\leq b(i)$ for all $i\in [1,n]$, where $a=(a(1),\ldots,a(n))$ and $b=(b(1),\ldots,b(n))$.

Stanley \cite{stan} calls a subset $\Gamma\subset \Nn$ a \emph{multicomplex} if for all $a\in \Gamma$ and all $b\in\Nn$ with
$b\leq a$, it follows that $b\in \Gamma$. Herzog and Popescu in \cite{hp} extended this definition as follows. Denote $\mathbb N_{\infty} =\mathbb N \cup \{\infty\}$ and set $k\leq \infty$ for all $k\in \mathbb N$. A subset $\Gamma\subset \Nni$ is called a \emph{multicomplex}, if the following two conditions hold:

(1) for all $a\in \Gamma$ and all $b\in\Nni$ with $b\leq a$, it follows that $b\in \Gamma$.

(2) for each $a\in \Gamma$ there exists $m\in\MG$ with $a\leq m$, where $\MG$ is the set of maximal elements in $\Gamma$, with respect to $\leq$.

The elements of $\Gamma$ are called \emph{facets} and the elements of $\MG$ are called \emph{maximal facets}. One can easily see that $\MG$ is a nonempty finite subset of $\Gamma$. Moreover, $\Gamma = \{a\in\Nni\;:\; a\leq m $ for some $ m\in\MG \}$.

Let $K$ be a field and $S=K[x_1,\ldots,x_n]$ the polynomial ring over $K$. Let $\Gamma$ be a multicomplex, and let $I(\Gamma)$ be the $K-subspace$ in $S$ spanned by all monomials $x^a:=x_1^{a(1)}\cdots x_n^{a(n)}$ such that $a\notin \Gamma$. Note that $I(\Gamma)$ is a monomial ideal. Conversely, given an arbitrary monomial ideal $I\subset S$, there is a unique multicomplex $\Gamma$ with $I=I(\Gamma)$. According to \cite[Corollary 9.8]{hp}, $\Gamma$ is the unique smallest multicomplex containing the set $A=\{a:\;x^{a}\notin I\}$.

Herzog, Hibi and Trung introduced in \cite{hht} the notion of vertex cover algebras for weighted simplicial complexes. In this paper, we extend that concept for simplicial multicomplexes. Herzog, Hibi and Trung proved that the vertex cover algebras are finitely generated, but this is not the case, in general, for multicomplexes, as Example $1.1$ shows. We describe the vertex cover algebras for multicomplexes which have only one maximal facet. More precisely, we show how we can reduce to the case when the maximal facet is a vector in $\Nn$ with all entries nonzero, see Theorem $1.6$. Also, we prove that these algebras are finitely generated.

\textbf{Aknowledgement.} The author owes a special thank to Dr. Mihai Cipu for valuables discussions regarding the second part of Theorem $1.6$.

\footnotetext[1]{This work was supported by a grant of the Romanian National Authority for Scientific
Research, CNCS – UEFISCDI, project number PN-II-ID-PCE-2011-3-1023.}

\newpage
\section{Vertex cover algebras of simplicial multicomplexes}

\hspace{12pt}
Let $\Gamma\subset \Nni$ be a multicomplex and let $\MG$ be the set of maximal facets of $\Gamma$. Consider a function
\[ \omega: \MG \rightarrow \mathbb N\setminus \{0\},\;\; m\mapsto \omega_{m} \]
that assigns to each maximal facet a positive integer. In this case, $\Gamma$ is called a \emph{weighted multicomplex}, denoted by $(\Gamma,\omega)$. We call $a\in\Nn$ a \emph{vertex cover} of $(\Gamma,\omega)$ of order $k$ if:
\[ \sum_{i=1}^{n} a(i)m(i) \geq k\omega_m,\;\;for\;\;all\;\; m\in\MG,\]
where we define $0\cdot \infty := 0$. The canonical weight function on a multicomplex $\Gamma$ is the weight function $\omega_0(m)=1$ for all maximal facets $m\in \MG$.

Let $S[t]$ be a polynomial ring over $S$ in the indeterminate $t$, and consider the $K$-vector space $A_k(\Gamma,\omega) \subset S[t]$ generated by all monomials $x_1^{a(1)} \cdots x_n^{a(n)} t^k$ such that $a\in \Nn$ is a vertex cover of $\Gamma$ of order $k$. We define
\[ A(\Gamma,\omega) = \bigoplus_{k\geq 0} A_k(\Gamma,\omega)\;\;with\;\; A_0(\Gamma,\omega)=S. \]

If $a$ is a vector cover of order $k$, and $b$ is a vector cover of order $l$, one can easily see that $a+b$ is a vertex cover of order $k+l$. This implies that $A_k(\Gamma,\omega)\cdot A_l(\Gamma,\omega) \subset A_{k+l}(\Gamma,\omega)$ and therefore $A(\Gamma,\omega)$ is a graded $S$-algebra. We call it the \emph{vertex cover algebra} of the weighted simplicial multicomplex $(\Gamma,\omega)$. For simplicity, we will use the notation $A(\Gamma)$
for $A(\Gamma,\omega_0)$.

\begin{exm}
Let $\Gamma=\{a\;:\;a\leq (0,\infty)\;or\; a\leq (2,0) \} \subset  \mathbb N_{\infty}^{2}$. The set of maximal facets of $\Gamma$ is $\MG=\{ (0,\infty),(2,0) \}$. We consider the canonical weight function on $\Gamma$. According to the definition, a vector $a\in\mathbb N^2$ is a vertex cover of order $k\geq 1$, if and only if $2a(1)\geq k$ and 
$a(2)\geq 1$. Therefore,
$ A(\Gamma) = K[x_1,x_2] \oplus \bigoplus_{k\geq 1}x_1^{ \left[ \frac{k+1}{2} \right] }x_2t^k K[x_1,x_2]$.

We claim that $A(\Gamma)$ is not a finitely generated $S$-algebra. Indeed, if for each $k\geq 1$, we denote $u_k:=x_1^{ \left[ \frac{k+1}{2} \right] }x_2t^k$, one can easily check that each $u_k$ is not an element of $S[u_1,\ldots,u_{k-1}]$.
\end{exm}

Let $a\in\Nni$ and consider $\Gamma(a)$ the unique smallest multicomplex containing $a$. According to \cite[Corollary 9.8]{hp}, $\Gamma(a)$ is well defined, and moreover, $\Gamma(a)=\{b\in \Nni\;:\; b\leq a \}$. Let $\omega$ be a weight on $\Gamma(a)$, i.e. we give a positive integer $\omega_a$. Our next goal is to describe the vertex cover algebra $A(\Gamma(a),\omega)$. 

Note that $\mathcal M(\Gamma(a))=\{a\}$. A vector $a\in\Nn$ is a vertex cover of degree $k\in \mathbb N$ for $(\Gamma(a),\omega)$, if and only if $a(1)b(1)+a(2)b(2)+\cdots a(n)b(n)\geq \omega_a k$. If $a(j)=\infty$, for some $j\in [1,n]$, then any vector $b\in \Nn$ with $b(j)>0$, is a $k$-vertex cover for $(\Gamma(a),\omega)$ for any $k\geq 0$. If $a(j)=0$, for some $j\in [1,n]$, then $b(j)$ does not contribute to the sum $\sum_{i=1}^n a(i)b(i)$. We consider two extreme cases:

(1) $a=(\infty,\ldots,\infty)$. In this case, any $b\in \Nn\setminus \{(0,\ldots,0)\}$ is a $k$-vertex cover for $(\Gamma(a),\omega)$, for any $k\geq 0$. Also, $(0,\ldots,0)$ is a $0$-vertex cover, but is not a $k$-vertex cover for $k\geq 1$. It follows that $A((\Gamma(a),\omega)) = S\oplus t(x_1,\ldots,x_n)S[t] = S[t]$. 

(2) $a=(0,\ldots,0)$. In this case, $\Gamma(a)$ has no $k$-vertex covers for $k\geq 1$. Therefore, $A(\Gamma(a),\omega) = S$.

Assume $a\in\Nni$ is not in any of the above cases. Without losing generality, we can assume $a=(a(1),\ldots,a(r),\infty,\ldots,\infty)$, where $r\leq n$ is a positive integer. Indeed, in order to compute $A((\Gamma(a),\omega))$, we may permute the variables, and we reduce to this case. Assume $r>0$. We denote $\tilde{a}=(a(1),\ldots,a(r)) \in \mathbb N^{r}$ and we consider the weight $\tilde{\omega}$ on $\Gamma(\tilde{a})$, defined by $\tilde{\omega}(\tilde{a}):=\omega_a$. With these notations, we have the following lemma.

\begin{lema}
Let $k$ be a positive integer and $b\in \Nn$. Denote $\tilde{b}=(b(1),\ldots,b(r))\in \mathbb N^r$.

(i) If $\tilde{b}$ is a $k$-vertex cover for $(\Gamma(\tilde{a}),\tilde{\omega})$, then $b$ is a $k$-vertex cover for $(\Gamma(a),\omega)$.

(ii) If $b=(\tilde{b},0,\ldots,0)$ is a $k$-vertex cover for $(\Gamma(a),\omega)$, then $\tilde{b}$ is a $k$-vertex cover for $(\Gamma(\tilde{a}),\tilde{\omega})$.
\end{lema}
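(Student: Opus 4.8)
The plan is to reduce both statements to the defining inequality for a $k$-vertex cover and to keep careful track of the summands indexed by $i>r$, where $a(i)=\infty$. Writing out the condition for $b\in\Nn$ to be a $k$-vertex cover of $(\Gamma(a),\omega)$, namely $\sum_{i=1}^{n}a(i)b(i)\geq \omega_a k$ (recall $\mathcal M(\Gamma(a))=\{a\}$), I would first split the left-hand side into the finite part $\sum_{i=1}^{r}a(i)b(i)=\sum_{i=1}^{r}\tilde a(i)\tilde b(i)$ and the tail $\sum_{i=r+1}^{n}a(i)b(i)$. In the tail each summand is $\infty\cdot b(i)$, which by the convention $0\cdot\infty:=0$ equals $0$ when $b(i)=0$ and $\infty$ otherwise; in particular every tail term is $\geq 0$. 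The analogous condition for $\tilde b\in\mathbb N^{r}$ to be a $k$-vertex cover of $(\Gamma(\tilde a),\tilde\omega)$ is $\sum_{i=1}^{r}\tilde a(i)\tilde b(i)\geq \tilde\omega(\tilde a)k=\omega_a k$, using $\tilde\omega(\tilde a)=\omega_a$.

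For part (i), I would assume $\sum_{i=1}^{r}\tilde a(i)\tilde b(i)\geq\omega_a k$ and simply observe that adjoining the nonnegative tail can only increase the left-hand side: $\sum_{i=1}^{n}a(i)b(i)=\sum_{i=1}^{r}\tilde a(i)\tilde b(i)+\sum_{i=r+1}^{n}a(i)b(i)\geq\sum_{i=1}^{r}\tilde a(i)\tilde b(i)\geq\omega_a k$, so $b$ is a $k$-vertex cover of $(\Gamma(a),\omega)$.

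For part (ii), the hypothesis $b=(\tilde b,0,\ldots,0)$ forces $b(i)=0$ for every $i>r$, so by the convention each tail term $a(i)b(i)=\infty\cdot 0=0$ vanishes and the sum collapses to its finite part: $\sum_{i=1}^{n}a(i)b(i)=\sum_{i=1}^{r}\tilde a(i)\tilde b(i)$. The assumed inequality $\sum_{i=1}^{n}a(i)b(i)\geq\omega_a k$ then reads $\sum_{i=1}^{r}\tilde a(i)\tilde b(i)\geq\tilde\omega(\tilde a)k$, which is exactly the statement that $\tilde b$ is a $k$-vertex cover of $(\Gamma(\tilde a),\tilde\omega)$.

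The argument is essentially bookkeeping, so I do not anticipate a genuine obstacle; the only point that needs care is the correct use of the convention $0\cdot\infty:=0$, so that the tail terms behave as claimed — nonnegative in general, and exactly zero under the hypothesis of (ii). It is also worth noting the asymmetry between the two parts: the implication in (i) holds for \emph{any} $b$ extending $\tilde b$, whereas (ii) genuinely needs the vanishing of the last $n-r$ coordinates, since if some $b(i)>0$ for $i>r$ the tail would contribute $\infty$ and the inequality would carry no information about $\tilde b$.
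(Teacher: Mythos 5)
Your proof is correct and follows essentially the same route as the paper's: part (i) uses that the tail terms $a(i)b(i)$ for $i>r$ are nonnegative, so the inequality for the truncated sum carries over, and part (ii) uses the convention $0\cdot\infty:=0$ to collapse the full sum to its finite part. The paper merely states part (ii) as ``similar to (i)''; you have simply written out the details it leaves implicit.
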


\begin{proof}
(i) Since $\tilde(b)$ is a $k$-vertex cover for $(\Gamma(\tilde{a}),\tilde{\omega})$, it follows that
$a(1)b(1)+\cdots+a(r)b(r)\geq k\cdot \tilde{\omega}_{\tilde{a}} = k\cdot \omega_a$. Therefore, $a(1)b(1)+\cdots+a(r)b(r)+\cdots+a(n)b(n)\geq k\cdot \omega_a$, and thus $b$ is a $k$-vertex cover for $(\Gamma(a),\omega)$.

(ii) the proof is similar to (i).
\end{proof}

\begin{prop}
$A(\Gamma(a),\omega) \cong S\oplus t (A(\Gamma(\tilde{a}),\tilde{\omega}) \oplus (x_{r+1},\ldots,x_n)S[t])$.
\end{prop}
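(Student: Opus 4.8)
The plan is to describe the graded algebra $A(\Gamma(a),\omega)$ one $t$-degree at a time, exploiting that the coordinates $r+1,\dots,n$ of $a$ equal $\infty$. The summand $S$ is accounted for by $A_0(\Gamma(a),\omega)=S$, so it suffices to understand $A_k(\Gamma(a),\omega)$ for each $k\geq 1$ and then reassemble. For such a $k$ the space $A_k(\Gamma(a),\omega)$ is the $K$-span of the monomials $x^bt^k$ with $b$ a $k$-vertex cover of $(\Gamma(a),\omega)$, and the natural first move is to split these monomials according to whether or not $x^b$ is divisible by some $x_i$ with $i>r$, i.e. according to whether $b(i)>0$ for some $i>r$, or $b(i)=0$ for all $i>r$.

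First I would treat the monomials $x^b$ divisible by some $x_i$ with $i>r$. Since $a(i)=\infty$ there and we set $0\cdot\infty:=0$, the defining inequality $\sum_{j=1}^n a(j)b(j)\geq k\omega_a$ has infinite left-hand side and is automatically satisfied, so every such $b$ is a $k$-vertex cover for every $k\geq 1$. Hence the span of these monomials in degree $k$ is exactly $(x_{r+1},\dots,x_n)S\,t^k$, and summing over $k\geq 1$ yields the summand $t\,(x_{r+1},\dots,x_n)S[t]$.

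Next I would treat the monomials with $b(i)=0$ for all $i>r$, that is $b=(\tilde b,0,\dots,0)$. Here the preceding Lemma is precisely the tool: by part (ii) such a $b$ is a $k$-vertex cover of $(\Gamma(a),\omega)$ exactly when $\tilde b$ is a $k$-vertex cover of $(\Gamma(\tilde a),\tilde\omega)$, and part (i) supplies the converse implication needed to identify the two spaces. Thus the span of these monomials in degree $k$ is the image of $A_k(\Gamma(\tilde a),\tilde\omega)$ under the inclusion $K[x_1,\dots,x_r][t]\hookrightarrow S[t]$, which I denote $\iota$. Because the two families of monomials have disjoint supports, their spans intersect only in $0$, giving in each degree $k\geq 1$ the direct sum $A_k(\Gamma(a),\omega)=(x_{r+1},\dots,x_n)S\,t^k\oplus \iota\bigl(A_k(\Gamma(\tilde a),\tilde\omega)\bigr)$.

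Finally I would assemble these pieces over all $k$ and match them with the asserted right-hand side: the base $S$ from degree $0$, the first family giving $t\,(x_{r+1},\dots,x_n)S[t]$, and the second family giving $\bigoplus_{k\geq1}\iota\bigl(A_k(\Gamma(\tilde a),\tilde\omega)\bigr)$, which is to be identified with the copy of $t\,A(\Gamma(\tilde a),\tilde\omega)$ appearing in the statement. The step I expect to be the main obstacle is exactly this last identification: one must make precise in what sense the positive-degree part $\bigoplus_{k\geq 1}A_k(\Gamma(\tilde a),\tilde\omega)$ corresponds to $t\,A(\Gamma(\tilde a),\tilde\omega)$, keeping careful track of the degree bookkeeping and verifying that the bijection on monomials respects the products $A_kA_l\subset A_{k+l}$, so that the $K$-vector space decomposition upgrades to the claimed isomorphism of graded $S$-algebras.
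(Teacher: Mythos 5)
Your proposal is correct and follows essentially the same route as the paper: the same case split on whether $b(i)>0$ for some $i>r$ (where the inequality holds automatically since $a(i)=\infty$), the same appeal to the preceding lemma for vectors with $b(i)=0$ for all $i>r$, and the same degreewise assembly $A_k(\Gamma(a),\omega)\cong A_k(\Gamma(\tilde{a}),\tilde{\omega})\oplus(x_{r+1},\ldots,x_n)S\,t^k$ together with $A_0(\Gamma(a),\omega)=S$. The ``obstacle'' you flag at the end --- that $\bigoplus_{k\geq 1}A_k(\Gamma(\tilde{a}),\tilde{\omega})$ is not literally $t\,A(\Gamma(\tilde{a}),\tilde{\omega})$ because of the degree shift --- is a genuine imprecision, but it is one the paper itself glosses over (its proof ends with ``we get the conclusion'' without addressing it), so your more careful bookkeeping only improves on, rather than deviates from, the paper's argument.
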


\begin{proof}
Let $b\in \Nn$ and fix a positive integer $k$. Note that if $b(j)>0$ for some $j>r$, then $b$ is a $k$-vertex cover for $A(\Gamma(a),\omega)$. Indeed, in this case, $\sum_{i=1}^n a(i)b(i) \geq a(j)b(j) = \infty > k\omega_a$. On the other hand, according to the previous lemma, if $b(j)=0$ for all $j>r$, then $b$ is a $k$-vertex cover for $A(\Gamma(a),\omega)$ if and only if $\tilde{b}=(b(1),\ldots,b(r))$ is a $k$-vertex cover for $(\Gamma(\tilde{a}),\tilde{\omega})$. It follows that
$A(\Gamma(a),\omega)_k \cong  A(\Gamma(\tilde{a}),\tilde{\omega})_k \oplus (x_{r+1},\ldots,x_n)S$. Since 
$A(\Gamma(a),\omega)_0 = S$, we get the conclusion.
\end{proof}

The above proposition shows that we can reduce to the case when $a\in \Nn\setminus\{(0,\ldots,0)\}$. By reordering the variables, we can assume that $a=(a(1),\ldots,a(p),0,\ldots,0)$, where $p>0$ and $a(j)>0$ for any $j\leq p$. We denote $\bar{a}=(a(1),\ldots,a(p))\in \mathbb N^n$ and we consider the weight $\bar{\omega}$ on $\Gamma(\bar{a})$, defined by $\bar{\omega}(\bar{a}):=\omega_a$. With these notations, we have the following lemma.

\begin{lema}
Let $k$ be a positive integer and $b\in \Nn$. Denote $\bar{b}=(b(1),\ldots,b(p))\in \mathbb N^p$. Then $b$ is a $k$-vertex cover for $(\Gamma(a),\omega)$ if and only if $\bar{b}$ is a $k$-vertex cover for  $(\Gamma(\bar{a}),\bar{\omega})$.
\end{lema}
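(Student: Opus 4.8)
The plan is to observe that, unlike the infinite-coordinate situation handled in Lemma 1.3, here the two vertex-cover conditions unwind by definition to one and the same inequality, so the equivalence is immediate and holds simultaneously in both directions.

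First I would record what each side means. Since $\mathcal{M}(\Gamma(a))=\{a\}$, the vector $b$ is a $k$-vertex cover for $(\Gamma(a),\omega)$ if and only if $\sum_{i=1}^{n}a(i)b(i)\geq k\omega_a$. Likewise, since $\mathcal{M}(\Gamma(\bar{a}))=\{\bar{a}\}$ and $\bar{\omega}_{\bar{a}}=\omega_a$, the vector $\bar{b}$ is a $k$-vertex cover for $(\Gamma(\bar{a}),\bar{\omega})$ if and only if $\sum_{i=1}^{p}\bar{a}(i)\bar{b}(i)\geq k\omega_a$.

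Next I would use the hypothesis $a(j)=0$ for all $j>p$ to match the two sums. Because these terms vanish, $\sum_{i=1}^{n}a(i)b(i)=\sum_{i=1}^{p}a(i)b(i)=\sum_{i=1}^{p}\bar{a}(i)\bar{b}(i)$, where the last equality merely records that $a(i)=\bar{a}(i)$ and $b(i)=\bar{b}(i)$ for $i\leq p$. Hence the defining inequality for $b$ and the defining inequality for $\bar{b}$ are literally the same, and the claimed equivalence follows at once.

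There is essentially no obstacle. The only point requiring care---and the reason this lemma is stated separately from Lemma 1.3---is that here the coordinates beyond $p$ carry weight $0$ rather than $\infty$, so they contribute nothing to the sum and no asymmetry between the two directions arises; the equivalence is an exact identity of the two sums rather than a pair of one-sided implications.
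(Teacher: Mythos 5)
Your proof is correct and is essentially the paper's own argument: since $a(p+1)=\cdots=a(n)=0$, the sum $\sum_{i=1}^n a(i)b(i)$ equals $\sum_{i=1}^p \bar{a}(i)\bar{b}(i)$, so the two defining inequalities coincide and the equivalence is immediate. You simply spell out the definitional unwinding more explicitly than the paper does; there is no difference in substance.
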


\begin{proof}
Indeed, since $a(p+1)=\cdots=a(n)=0$, we have $\sum_{i=1}^n a(i)b(i)=\sum_{i=1}^p a(i)b(i)$ and therefore we get the required conclusion.
\end{proof}

As a direct consequence of the previous lemma, we get the following proposition.

\begin{prop}
$A(\Gamma(a),\omega) = A(\Gamma(\bar{a}),\bar{\omega})[x_{p+1},\ldots,x_n]$.
\end{prop}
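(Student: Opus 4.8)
The plan is to prove the equality by comparing homogeneous components with respect to the $t$-grading. Both $A(\Gamma(a),\omega)$ and $A(\Gamma(\bar a),\bar\omega)[x_{p+1},\ldots,x_n]$ are graded $K$-subalgebras of $S[t]$, where I regard $A(\Gamma(\bar a),\bar\omega)\subset K[x_1,\ldots,x_p][t]\subset S[t]$ via the natural inclusion. Hence it suffices to show that for every $k\geq 0$ the two $K$-vector spaces have the same degree-$k$ piece.

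First I would record a $K$-basis of each side. By definition, $A(\Gamma(a),\omega)_k$ has as a $K$-basis the monomials $x^b t^k$ with $b\in\Nn$ a $k$-vertex cover of $(\Gamma(a),\omega)$. The previous lemma says that $b$ is a $k$-vertex cover of $(\Gamma(a),\omega)$ if and only if $\bar b=(b(1),\ldots,b(p))$ is a $k$-vertex cover of $(\Gamma(\bar a),\bar\omega)$, with no condition imposed on the remaining entries $b(p+1),\ldots,b(n)$. Thus the basis monomials of $A(\Gamma(a),\omega)_k$ are exactly those $x^b t^k$ for which $\bar b$ is a $k$-vertex cover of $(\Gamma(\bar a),\bar\omega)$ and $b(p+1),\ldots,b(n)$ are arbitrary nonnegative integers.

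The key step is to factor each such monomial as
\[ x^b t^k = \bigl(x_1^{b(1)}\cdots x_p^{b(p)}\,t^k\bigr)\cdot x_{p+1}^{b(p+1)}\cdots x_n^{b(n)}. \]
The first factor is precisely a basis monomial of $A(\Gamma(\bar a),\bar\omega)_k$, and the second is a monomial in the variables $x_{p+1},\ldots,x_n$. This exhibits a bijection between the basis monomials of $A(\Gamma(a),\omega)_k$ and the products of a basis monomial of $A(\Gamma(\bar a),\bar\omega)_k$ with a monomial in $x_{p+1},\ldots,x_n$; the latter products span $\bigl(A(\Gamma(\bar a),\bar\omega)[x_{p+1},\ldots,x_n]\bigr)_k$. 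Comparing these bases gives the equality of the degree-$k$ pieces, and summing over $k$ yields the claimed identity of graded algebras.

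The only point requiring care is that the symbol $[x_{p+1},\ldots,x_n]$ denotes a genuine polynomial ring, i.e. that $x_{p+1},\ldots,x_n$ are algebraically independent over $A(\Gamma(\bar a),\bar\omega)$. This is immediate once everything is placed inside $S[t]$: since $S[t]=K[x_1,\ldots,x_p][t][x_{p+1},\ldots,x_n]$, the variables $x_{p+1},\ldots,x_n$ are algebraically independent over $K[x_1,\ldots,x_p][t]$, hence a fortiori over the subring $A(\Gamma(\bar a),\bar\omega)$. I expect no genuine obstacle beyond keeping this identification explicit, since the combinatorial heart of the statement is already contained in the preceding lemma.
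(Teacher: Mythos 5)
Your proposal is correct and takes essentially the same route as the paper: the paper states this proposition as ``a direct consequence of the previous lemma,'' and your degree-by-degree comparison via the factorization $x^b t^k = \bigl(x_1^{b(1)}\cdots x_p^{b(p)}t^k\bigr)\cdot x_{p+1}^{b(p+1)}\cdots x_n^{b(n)}$ is precisely the argument the paper leaves implicit. Your extra remark on the algebraic independence of $x_{p+1},\ldots,x_n$ over $A(\Gamma(\bar a),\bar\omega)$ inside $S[t]$ is a worthwhile clarification of the notation, not a deviation in method.
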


\begin{teor}
Suppose after renumbering that $a(i)\in \mathbb N\setminus\{0\}$ for $1\leq i\leq p$, $a(i) = 0$ for $p<i\leq r$ and $a(i)=\infty$ for $r<i\leq n$ and let $\bar{a} =(a(1),\ldots,a(p))$. Then:

\[ A(\Gamma(a);\omega)= S \oplus t(A(\Gamma(\bar{a}),\bar{\omega})[x_{p+1},\ldots,x_{r}]\oplus (x_{r+1},\ldots,x_n)  S[t]).\]

Moreover, $A(\Gamma(a);\omega)$ is a finitely generated $S$-algebra.
\end{teor}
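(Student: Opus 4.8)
The plan is to read off the displayed decomposition by chaining the two reduction steps already proved, and then to extract finite generation from the resulting description. First I would strip the infinite coordinates: applying the proposition that treats the $\infty$-entries with $\tilde a=(a(1),\ldots,a(r))$ gives
\[ A(\Gamma(a),\omega) = S \oplus t\bigl(A(\Gamma(\tilde a),\tilde\omega)\oplus (x_{r+1},\ldots,x_n)S[t]\bigr). \]
Since $\tilde a\in\mathbb N^r$ has $\tilde a(j)>0$ for $j\le p$ and $\tilde a(j)=0$ for $p<j\le r$, I would then apply the zero-coordinate proposition to $\tilde a$, which yields $A(\Gamma(\tilde a),\tilde\omega)=A(\Gamma(\bar a),\bar\omega)[x_{p+1},\ldots,x_r]$. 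Substituting this into the previous line produces exactly the claimed formula. This part is essentially bookkeeping; the only thing to verify is that the identifications of graded pieces in the two propositions are compatible, and I expect no real difficulty there.

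For finite generation, the decomposition reduces the problem to its three constituents. The factor $A(\Gamma(\bar a),\bar\omega)[x_{p+1},\ldots,x_r]$ is a polynomial extension of $A(\Gamma(\bar a),\bar\omega)$ by finitely many variables, so it is finitely generated as soon as $A(\Gamma(\bar a),\bar\omega)$ is; the base $S=A_0$ contributes the generators $x_1,\ldots,x_n$. Thus the crux is to prove that $A(\Gamma(\bar a),\bar\omega)$ is a finitely generated $K$-algebra, where $\bar a=(a(1),\ldots,a(p))$ has all entries in $\mathbb N\setminus\{0\}$.

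Here I would argue via affine semigroups. A pair $(b,k)\in\mathbb N^p\times\mathbb N$ indexes a monomial $x^b t^k$ of $A(\Gamma(\bar a),\bar\omega)$ precisely when $\sum_{i=1}^p a(i)b(i)\ge k\,\omega_a$, that is, when $(b,k)$ is a lattice point of the rational polyhedral cone $C=\{(b,k)\in\mathbb R_{\ge0}^{p+1}:\sum_{i=1}^p a(i)b(i)\ge k\,\omega_a\}$. Because vertex covers add, the set $H=C\cap(\mathbb N^p\times\mathbb N)$ is a submonoid of $\mathbb N^{p+1}$, and the assignment $x^b t^k\mapsto(b,k)$ identifies $A(\Gamma(\bar a),\bar\omega)$ with the affine semigroup algebra $K[H]$. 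By Gordan's lemma $H$ is finitely generated, hence $K[H]=A(\Gamma(\bar a),\bar\omega)$ is a finitely generated $K$-algebra; one can moreover read off explicit generators from the finitely many irreducible lattice points of $C$, which I suspect is the explicit content behind the acknowledgement. Together with the previous paragraph this settles finite generation for the finite-and-zero part.

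The step I expect to be the main obstacle is the interaction with the infinite coordinates, namely the summand $t\,(x_{r+1},\ldots,x_n)S[t]$. Since any $b$ with $b(j)>0$ for some $j>r$ is a $k$-cover for every $k$, all monomials $x_j t^k$ with $j>r$ and $k\ge1$ lie in the algebra, and the difficulty is to produce them in bounded $t$-degree: a factorization realizing $x_j t^k$ must carry the full $t$-degree $k$ while keeping the $x$-degree small, so one must decide whether $x_{r+1}t,\ldots,x_n t$ together with the finite part suffice or whether further generators are forced. I would isolate this as a separate lemma and attempt the reduction $x^b t^k=(x_j t)\cdot x^{b-e_j}t^{k-1}$, valid whenever $b-e_j$ is again a cover; the decisive point is the boundary case in which $b$ has a single infinite-support entry equal to $1$, for there $b-e_j$ is supported on $\{1,\ldots,r\}$ and need no longer be a $(k-1)$-cover. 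This is exactly where the degree reduction can break down, so I would scrutinize this case with care before asserting finite generation of the full algebra $A(\Gamma(a),\omega)$.
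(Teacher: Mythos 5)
Your decomposition step coincides exactly with the paper's proof, which likewise just chains Proposition 1.3 and Proposition 1.5, so that part is fine. Your handling of the all-positive-finite case is where you genuinely diverge: the paper does not use Gordan's lemma but an explicit splitting argument. For $k\geq(a(1)+\cdots+a(n)+1)\omega_a$ and $b$ a $k$-cover it sets $I=\{i:\,b(i)>\omega_a\}$, defines $b'(i)=b(i)-\omega_a$ on $I$ and $b'(i)=b(i)$ elsewhere, puts $b''=b-b'$, and claims $b'$ is a $(k-|I|)$-cover and $b''$ a $|I|$-cover, whence generation in $t$-degree at most $(a(1)+\cdots+a(n)+1)\omega_a-1$. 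As written this is wrong: the claim about $b'$ needs $\sum_{i\in I}a(i)\leq|I|$, which fails unless $a(i)=1$ on $I$. For instance with $n=1$, $a=(2)$, $\omega_a=1$, $b=(2)$, $k=4$, one gets $b'=(1)$, $k'=3$, and $2\cdot 1<3$, so $b'$ is not a $k'$-cover. (The splitting can be repaired by taking $k''=\sum_{i\in I}a(i)$ instead of $|I|$.) Your semigroup argument, identifying $A(\Gamma(\bar a),\bar\omega)$ with $K[H]$ for $H$ the lattice points of the rational cone $\sum_i a(i)b(i)\geq k\omega_a$ in $\mathbb R_{\geq 0}^{p+1}$, is correct and sidesteps this issue entirely, so on this portion your route is both different from and sounder than the paper's.

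The obstacle you flag in your final paragraph is not a technicality you failed to close; it is a genuine failure of the statement, and you should carry your boundary-case analysis to its conclusion. For any $j>r$ and any $k\geq 1$ the monomial $x_jt^k$ lies in $A(\Gamma(a),\omega)$, and it is irreducible there: in any factorization $x_jt^k=x^c\cdot\prod_i x^{b_i}t^{k_i}$ with all $k_i\geq 1$, each $b_i$ must be nonzero, because the zero vector is never a cover of positive order (its cover condition reads $0\geq k_i\omega_a$, which fails since $\omega_a\geq 1$), so no pure power $t^m$ with $m\geq 1$ lies in the algebra; but the exponent vectors must sum to $e_j$, which has total degree $1$, so only one factor of positive $t$-degree can occur and it must be $x_jt^k$ itself. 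Hence whenever $r<n$ the algebra has the infinitely many irreducible elements $x_jt^k$, $k\geq 1$, and is not a finitely generated $S$-algebra: the ``Moreover'' part of the theorem is false in the presence of infinite coordinates. The paper's proof conceals this in the unjustified sentence that ``it is enough to consider the case $a\in\mathbb N^n$ with $a(i)>0$ for all $i$,'' and the root of the error is the earlier claim, in case (1) of the discussion preceding Lemma 1.2, that $S\oplus t(x_1,\ldots,x_n)S[t]=S[t]$; that equality fails precisely because $t^k$ is not in the algebra. What survives, and what your Gordan argument fully proves, is finite generation when $a$ has no infinite entries, where $A(\Gamma(a),\omega)=A(\Gamma(\bar a),\bar\omega)[x_{p+1},\ldots,x_n]$. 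So do not try to fill the gap you identified; instead record your $x_jt^k$ computation as a counterexample and restrict the finite-generation assertion to the case $r=n$.
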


\begin{proof}
The decomposition of $A(\Gamma(a);\omega)$ is a direct consequence of Proposition $1.3$ and Proposition $1.5$. For the second statement, using the above decomposition, it is enough to consider the case when 
$a\in \Nn$ such that $a(i)>0$ for all $i$. Let $k\geq (a(1)+\cdots a(n)+1)\cdot \omega_a$ be an integer and let $b\in \Nn$ be a $k$-vertex cover of $A(\Gamma(a);\omega)$, i.e. $\sum_{i=1}^n a(i)b(i)\geq k\cdot \omega_a$.

Since $k\geq (a(1)+\cdots a(n)+1)\cdot \omega_a$, it follows that the set $I:=\{i\in [1,n]:\; b(i)>\omega \}$ is nonempty. We define the vector $b'\in\Nn$, by $b'(i):=b(i)-\omega_a$ for $i\in I$ and $b'(i)=b(i)$ otherwise. Let $b'':=b-b'$, $k'=k-|I|$ and $k'':=|I|$. One can easily check that $b'$ is a $k'$-vertex cover and $b''$ is a $k''$-vertex cover. It follows that $A(\Gamma(a);\omega)$ is generated, as $S$-algebra, by the monomials from $A(\Gamma(a);\omega)_l$ for $1\leq l \leq (a(1)+\cdots a(n)+1)\cdot \omega_a - 1$.
\end{proof}

We end our paper with the following example.

\begin{exm}
Let $\mathbf{1}=(1,\ldots,1)\in \Nn$ and $\omega(\mathbf{1}):=\omega_{\mathbf{1}}$ a positive integer. A $k$-vertex cover for $(\Gamma(\mathbf{1}),\omega)$, is a vector $b\in \Nn$ such that $b(1)+\cdots+b(n)\geq k\cdot \omega_{\mathbf{1}}$. It follows that $A(\Gamma(\mathbf{1}),\omega)_k = (x_1,\ldots,x_n)^{k\cdot \omega_{\mathbf{1}}}$ and therefore
\[A(\Gamma(\mathbf{1}),\omega) = S\oplus(\bigoplus_{k\geq 1} (x_1,\ldots,x_n)^{k\cdot \omega_{\mathbf{1}}} t^k). \]
Note that $x_1^{\omega_{\mathbf{1}}}t,\ldots,x_n^{\omega_{\mathbf{1}}}t$ is a finite system of generators for $A(\Gamma(\mathbf{1}),\omega)$ as $S$-algebra.
\end{exm}

\vspace{2mm} \noindent {\footnotesize
\begin{minipage}[b]{15cm}
 Mircea Cimpoea\c s, Simion Stoilow Institute of Mathematics, Research unit 5, P.O.Box 1-764, Bucharest 014700, Romania\\
 E-mail: mircea.cimpoeas@imar.ro
\end{minipage}}
\end{document}